\titleformat{\subsubsection}[runin]
	{\normalfont\normalsize\bfseries\filcenter}{\thesubsection.}{1 ex}{}
\declaretheorem[within=section]{theorem}
\declaretheorem[within=section]{lemma}
\declaretheorem[sibling=lemma,name=Proposition]{prop}
\newcommand{\iprod}[2]{\left\langle {#1},{#2}\right\rangle}
\newcommand{\eop}{\hfill$\qed$}
\newcommand\hatP{\hat{P}}
\newcommand\hatp{\hat{p}}
\newcommand\hatx{\hat{x}}
\newcommand\cV{\mathcal{V}}
\renewcommand\AA{\mathbb{A}}
\newcommand\EE{\mathbb{E}}
\newcommand\NN{\mathbb{N}}
\newcommand\QQ{\mathbb{Q}}
\newcommand\RR{\mathbb{R}}
\newcommand{\Gale}[1]{{#1}^{\vee}} % Gale dual
\newcommand{\trans}[1]{{#1}^\top}
\DeclareMathOperator{\conv}{conv}
\DeclareMathOperator{\rank}{rank}
\newcommand{\defn}[1]{\emph{\color{blue} #1}} % Defined words in blue
\newcommand{\QQQ}{{\cal Q}}
\newcommand{\CCC}{{\cal C}}
\newcommand*{\doi}[1]{doi: \href{https://dx.doi.org/#1}{\urlstyle{rm}\nolinkurl{#1}}}
\newcommand*{\arxiv}[1]{arXiv:  \href{https://arxiv.org/abs/#1}{\urlstyle{rm}\nolinkurl{#1}}}
\begin{document}
\title{Global rigidity of complete bipartite graphs}
\author{Robert Connelly\thanks{Department of Mathematics, Cornell University, Ithaca, USA. Partially supported by NSF Grant: DMS-1564493.} \and Steven J. Gortler\thanks{School of Engineering and Applied Sciences, Harvard University, Cambridge, USA. Partially supported by NSF Grant: DMS-1564473 and the Aalto Science Institute (AScI) Thematic Program 
``Challenges in large geometric structures and big data''.} \and Louis Theran\thanks{School of Mathematics and Statistics, University of St Andrews, St Andrews, Scotland. Partially supported by Finnish Academy (AKA) project COALESCE.}}

\date{}

\maketitle 
%%%%%%%%%%%%%%%%%%%%%%%%%%%%%%%%
\section{Introduction}\label{sec:intro}
%%%%%%%%%%%%%%%%%%%%%%%%%%%%%%%%
In this note, we prove the following.
\begin{theorem}\label{thm: bipartite GGR}
Let $d\in \NN$ and $m,n \ge d + 1$, with $m + n \ge \binom{d+2}{2} + 1$.  Then the complete bipartite graph 
$K_{m,n}$ is generically globally rigid in dimension $d$.
\end{theorem}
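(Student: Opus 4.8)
The plan is to convert global rigidity into a rank statement about equilibrium stress matrices, and then to manufacture a stress matrix of maximal rank by exploiting the very rigid structure of stresses on a complete bipartite graph. First I would invoke the Gortler--Healy--Thurston characterization together with Connelly's sufficient condition: a graph on $N$ vertices is generically globally rigid in $\RR^d$ if and only if some (equivalently, a generic) $d$-dimensional framework carries an equilibrium stress matrix of rank $N-d-1$. For $K_{m,n}$ this reduces the theorem to producing, at a suitable placement of the $m+n$ vertices, an equilibrium stress matrix of rank exactly $m+n-d-1$, which is the largest value possible since every stress matrix annihilates the $(d+1)$-dimensional space spanned by the homogeneous coordinates of the configuration.

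Next I would unwind what equilibrium stresses of $K_{m,n}$ actually are. Writing $A$ and $B$ for the $m\times(d+1)$ and $n\times(d+1)$ matrices of homogeneous coordinates of the two colour classes and $W=(\omega_{ij})$ for the stress, the equilibrium conditions collapse to $WB = D_r A$ and $\trans{W}A = D_c B$, where $D_r=\mathrm{diag}(W\vecone)$ and $D_c=\mathrm{diag}(\trans{W}\vecone)$. Multiplying these through shows that every stress determines a symmetric ``quadric'' matrix $C=\trans{A}D_rA=\trans{B}D_cB$, lying in the intersection of the spans of $\{\hat{a}_i\trans{\hat{a}_i}\}$ and $\{\hat{b}_j\trans{\hat{b}_j}\}$ inside the $\binom{d+2}{2}$-dimensional space of symmetric $(d+1)\times(d+1)$ matrices. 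The stresses with $C=0$ are precisely those with vanishing row and column sums; they form a space of dimension $(m-d-1)(n-d-1)$ and yield block-antidiagonal stress matrices whose rank is at most $2\min(m-d-1,n-d-1)$, strictly below the target $m+n-d-1$. The remaining stresses exist exactly when the above intersection of spans is nontrivial, i.e. when $m+n>\binom{d+2}{2}$ --- which is the hypothesis $m+n\ge\binom{d+2}{2}+1$. This is the structural heart of the argument and it explains the numerology in the statement.

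To assemble a full-rank stress I would use the fact that ``admits a stress matrix of rank $\ge m+n-d-1$'' is an open condition and that the stress space varies semicontinuously, so it suffices to produce a single witness: one configuration whose stress space has the generic dimension, together with one stress whose matrix attains the full rank; the generic case then propagates by lower semicontinuity of rank. I would search for a tractable model configuration --- for example points on the moment curve, so that $\trans{A}A$ and $\trans{B}B$ carry Vandermonde structure --- and combine a quadric stress ($C\neq 0$) with the zero-sum stresses so as to force the kernel of the stress matrix $\Omega$ down to the unavoidable trivial directions. Concretely I would take a kernel vector $(u,w)$, feed it through $D_r u = Ww$ and $D_c w=\trans{W}u$, and show it must be of the form $u=Ax$, $w=Bx$.

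The hard part will be exactly this last step: bounding the kernel of $\Omega$ so that it is no larger than the forced $(d+1)$-dimensional trivial subspace, especially in the extremal case $m+n=\binom{d+2}{2}+1$ and when $m$ or $n$ equals $d+1$, where one colour class cannot even pin down an opposite vertex up to reflection. To reach the general case I would set up an induction that appends one vertex to a colour class at a time: given a rank-maximal stress matrix for a smaller complete bipartite graph, extend it across the new vertex --- whose degree is the size of the opposite class, hence at least $d+1$ --- by choosing a compatible equilibrium stress on the new edges and verifying that the rank goes up by one. The base case $m+n=\binom{d+2}{2}+1$ would be handled by the explicit model computation, while the delicate low-degree situations (a colour class of size exactly $d+1$) would need a direct stress-matrix argument rather than a naive gluing step, and I expect these to be where most of the technical work lives.
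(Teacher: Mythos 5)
Your infrastructure is sound: the decomposition of bipartite equilibrium stresses into the zero row/column-sum part of dimension $(m-d-1)(n-d-1)$ plus a ``quadric'' part is exactly the Bolker--Roth theorem the paper invokes (Theorem~\ref{thm:bolker-roth}), and your insistence that the witness configuration have stress space of \emph{generic} dimension is the right safeguard --- note that the bare equivalence you open with, ``some (equivalently, a generic) framework carries a maximal-rank stress,'' is false as stated: $K_{3,3}$ with its two classes alternating on a circle carries a PSD equilibrium stress matrix of maximal rank $3$, yet $K_{3,3}$ is not GGR in the plane. The genuine gap is that the one step carrying all the content --- exhibiting a configuration that simultaneously has stress space of the generic dimension $(m-d-1)(n-d-1)+1$ \emph{and} some stress matrix of rank $m+n-d-1$ --- is precisely what you defer as ``the hard part,'' and neither concrete suggestion in your plan survives scrutiny. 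Placing all $m+n=\binom{d+2}{2}+1$ points on the moment curve violates your own prerequisite for every $d\ge 2$: the Veronese image of the moment curve spans only $2d+1$ linear dimensions inside the $\binom{d+2}{2}$-dimensional space of symmetric $(d+1)\times(d+1)$ matrices, so those points acquire at least $\binom{d+2}{2}-2d\ge 2$ independent dependencies, the stress space jumps above its generic dimension, and the semicontinuity transfer collapses (this is exactly the failure mode of the $K_{3,3}$ example). Conversely, at a configuration that \emph{does} have generic stress dimension --- the paper's choice in Proposition~\ref{prop:Kd+1d+1}: only a $K_{d+1,d+1}$ core on the rational normal curve, all other points generic --- one can check that \emph{no} stress attains rank $m+n-d-1$ when $m\ne n$: every stress there is a multiple of the core's quadric stress (rank $d+1$, supported on $2d+2$ vertices) plus a zero-diagonal stress (rank at most $2\min(m,n)-2d-2$), and these ranks sum to $2\min(m,n)-d-1<m+n-d-1$ unless $m=n$. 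So your witness would need its unique Veronese dependency to involve essentially all of the vertices, and bounding the kernel of $\Omega$ at such a configuration is an open-ended computation that the proposal does not begin; your closing induction step is also problematic for bipartite graphs, since the new vertex's neighborhood is an independent set and there is no localized $K_{d+2}$-type stress to graft on.

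It is worth internalizing how the paper sidesteps your hard step entirely: it never constructs a maximal-rank stress anywhere. Instead it uses the Gortler--Healy--Thurston result (Theorem~\ref{thm:infggr}) that a \emph{single} framework which is both infinitesimally rigid and globally rigid certifies GGR. Global rigidity of the witness comes from convexity, not rank: for the core alternating on the rational normal curve, $\conv(\cV(p))$ and $\conv(\cV(q))$ meet in their relative interiors, so the core is super stable by Theorem~\ref{thm:bipartite}, hence universally rigid, and universal rigidity survives trilateration over $d+1$ affinely spanning neighbors (Lemma~\ref{lem:trilatgr}). Infinitesimal rigidity then comes from your very decomposition, but used only for its \emph{dimension} count $(m-d-1)(n-d-1)+1$ fed into the Maxwell index theorem; the rank of an individual stress matrix never enters. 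Indeed, as computed above, the paper's witness has no maximal-rank stress at all when $m\ne n$, which shows concretely that the route through Connelly's sufficient condition demands strictly more from the witness than the route through Theorem~\ref{thm:infggr}, and that the two are not interchangeable here.
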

This statement has appeared in \cite[Theorem 63.2.2]{HANDBOOK}, but a 
proof hasn't yet been circulated.

%%%%%%%%%%%%%%%%%%%%%%%%%%%%%%%%
\section{Setup and background}\label{sec:setup}
%%%%%%%%%%%%%%%%%%%%%%%%%%%%%%%%
We start by introducing the necessary concepts and definitions.

%%%%%%%%%%%%%%%%%%%%%%%%%%%%%%%%
\subsection{Rigidity}
%%%%%%%%%%%%%%%%%%%%%%%%%%%%%%%%

\paragraph{Frameworks}
A framework $(G,p)$ a graph $G$ with $n$ vertices and a 
\defn{configuration} $p : V\to \EE^d$, mapping the vertex set $V$ of 
$G$ to a $d$-dimensional point set in Euclidean space.

By picking an origin arbitrarily, we identify points  $x\in \EE^d$ 
with  affine coordinates of the form 
$\hatx := (\cdots, 1)\in \RR^{d+1}$.  Thus, we may identify a configuration
with a vector in $\left(\RR^{d}\right)^n$ or its affine counterpart 
$\hatp\in \left(\RR^{d+1}\right)^n$. we can also write this as an
$n\times (d+1)$ \defn{configuration matrix} $\hatP$.

Fix a dimension $d$ and a graph $G$.  Two frameworks $(G,p)$ and 
$(G,q)$ are \defn{equivalent} if
\[
    \|p(j) - p(i)\| = \|q(j) - q(i)\|
    \qquad \text{(all edges $\{i,j\}$ of $G$)}
\]
They are \defn{congruent} if there is a Euclidean motion 
$T$ of $\EE^d$ so that
\[
    q(i) = T(p(i))\qquad \text{(all verts. $i$ of $G$)}
\]
A framework $(G,p)$ is \defn{rigid} if there is a neighborhood $U\ni p$
so that if $q\in U$ and $(G,q)$ is equivalent to $(G,p)$, then $q$ is 
congruent to $p$.  A framework $(G,p)$ is \defn{globally rigid} if \emph{any}
$(G,q)$ equivalent to $(G,p)$ is congruent to it.  

Rigidity \cite{AR78} and global rigidity \cite{GHT10} are 
\defn{generic properties}.  A configuration is \defn{generic} if its 
coordinates are algebraically independent over $\QQ$.
\begin{theorem}[\cite{AR78,GHT10}]
Let $d$ be a dimension and $G$ a graph.  Then either every generic 
framework $(G,p)$ in dimension $d$ 
is (globally) rigid or no generic framework is 
(globally) rigid.
\end{theorem}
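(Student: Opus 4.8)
The plan is to reduce the (local or global) rigidity of a generic framework to a \emph{maximal-rank} condition on an auxiliary matrix whose entries are polynomials in the configuration, and then to invoke the following elementary principle. A Zariski-open subset $U\subseteq(\RR^d)^n$ defined over $\QQ$ is either empty or contains every generic configuration: if $U$ is nonempty its complement is a proper $\QQ$-subvariety, the zero set of some nonzero polynomials over $\QQ$, and a generic $p$ satisfies no such relation. Consequently any property that is equivalent, at generic configurations, to a maximal-rank condition automatically obeys the claimed dichotomy, since a maximal-rank locus $\{\rank M(p)\ge k\}$ is exactly such a $U$. All the work lies in establishing the two reductions.

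For rigidity I would introduce the measurement map $\ell:(\RR^d)^n\to\RR^E$, indexed by the edges $E$ of $G$, sending $p$ to the squared edge lengths $\|p(i)-p(j)\|^2$; this is a polynomial map over $\QQ$. Its differential is the rigidity matrix $R(p)$, whose entries are affine in $p$, so $\rank R(p)$ attains a maximum $r$ on a Zariski-open set containing all generic $p$. At such a $p$ the constant-rank theorem makes $\ell^{-1}(\ell(p))$ a smooth manifold of dimension $dn-r$ near $p$, while for a configuration affinely spanning $\EE^d$ (which generic $p$ does when $n\ge d+1$) the congruence class is a smooth submanifold of that fiber of dimension $\binom{d+1}{2}$. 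Rigidity at $p$ means exactly that these coincide near $p$, i.e.\ that $r=dn-\binom{d+1}{2}$; this is the Asimow--Roth equivalence of rigidity with infinitesimal rigidity at generic configurations \cite{AR78}. The resulting condition depends only on $G$ and $d$, so the dichotomy for rigidity follows from the principle above.

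Global rigidity is harder because it is not local: one must certify that the \emph{entire} fiber $\ell^{-1}(\ell(p))$, not just a neighborhood of $p$, is a single congruence class. Here I would follow Gortler--Healy--Thurston \cite{GHT10}. To each equilibrium stress $\omega\in\RR^E$ (i.e.\ $\omega^\top R(p)=0$) one associates its $n\times n$ stress matrix $\Omega$, which satisfies $\Omega\hatP=0$ and hence has rank at most $n-d-1$. The characterization to prove is that a generic framework is globally rigid in $\EE^d$ if and only if some equilibrium stress has stress matrix of the maximal rank $n-d-1$. Granting this, genericity is immediate: the space of equilibrium stresses is the left kernel of a matrix polynomial in $p$, the largest stress-matrix rank attainable over that kernel is again a maximal-rank quantity, and ``this maximum equals $n-d-1$'' cuts out a Zariski-open $\QQ$-locus, to which the principle of the first paragraph applies.

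The main obstacle is the stress-matrix characterization itself, and in particular its necessity direction, where the non-local nature of global rigidity bites; sufficiency is the older stress argument of Connelly. The plan for necessity is to pass to the complexification of $\ell$ and study the generic fiber over its image as a complex algebraic set: one shows that a generic real fiber consists of finitely many congruence classes, analyzes how the nearby complex fiber relates them, and proves that a rank-deficient maximal stress matrix lets one deform $p$ along the complex fiber to produce a second configuration equivalent but not congruent to $(G,p)$, contradicting global rigidity; conversely such a second configuration yields an equilibrium stress whose matrix is rank-deficient. Carrying this out rigorously---controlling the complex fibers, the free congruence action, and their interaction with the stress matrix---is the substantive content of \cite{GHT10}. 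Once it is in place, the dichotomy for global rigidity reduces, exactly as for rigidity, to the openness of a maximal-rank locus.
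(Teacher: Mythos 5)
The first thing to note is that the paper contains no proof of this statement: it is imported background, attributed to \cite{AR78} for rigidity and \cite{GHT10} for global rigidity. So the real comparison is between your proposal and those two references, and at the level of architecture your reconstruction is faithful to both. The principle that a nonempty Zariski-open set defined over $\QQ$ contains every generic configuration, the Asimow--Roth equivalence of generic rigidity with $\rank R(p) = dn - \binom{d+1}{2}$, and the reduction of the global-rigidity dichotomy to the Gortler--Healy--Thurston stress-matrix characterization (together with openness of the locus where the maximal achievable stress-matrix rank is $n-d-1$) is exactly how the cited papers argue. Your openness argument for the stress side is also sound: on the open set where $\rank R(p)$ is maximal, the stress space has constant dimension and admits locally polynomial bases, so maximality of stress-matrix rank is again an open $\QQ$-defined condition.

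The genuine gap---which you flag yourself---is that the necessity half of the characterization (a generic globally rigid framework on $n \ge d+2$ vertices admits an equilibrium stress matrix of rank $n-d-1$) is named but not proven. ``Complexify, analyze the generic fiber, deform along it'' is a table of contents, not an argument; controlling the complexified measurement variety, its singularities, the congruence action, and the passage between complex and real points is the bulk of \cite{GHT10}, and nothing in your text substitutes for it. Three smaller points. First, your rank criterion for rigidity needs $n \ge d+1$; for $n \le d$ the congruence orbit has dimension smaller than $\binom{d+1}{2}$, so the count changes and the dichotomy there needs the separate (easy) observation about small complete graphs. Second, the stress-matrix characterization requires $n \ge d+2$: when $n = d+1$ the zero stress has rank $n-d-1 = 0$, so the criterion as you state it would wrongly certify any graph on $d+1$ vertices. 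Third, your closing ``conversely'' is garbled: the contrapositive of sufficiency requires that a second non-congruent configuration forces \emph{every} equilibrium stress matrix to be rank-deficient, not that it produces \emph{some} rank-deficient one. In sum: correct skeleton, the same route as the sources the paper cites, but the deep step is cited rather than proven---which, to be fair, is precisely what the paper itself does.
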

If every generic framework $(G,p)$ in dimension $d$ is globally 
rigid, we say that $G$ is \defn{generically globally rigid (GGR)}
in dimension $d$.

\paragraph{Infinitesimal rigidity}
The \defn{rigidity matrix} $R(p)$ of a framework $(G,p)$ is the 
matrix of the linear system
\[
    \iprod{p(j) - p(i)}{p'(j) - p'(i)} = 0
    \qquad \text{(all edges $\{i,j\}$ of $G$)}
\]
where the vector configuration $p'$ is variable.  The kernel
of $R(p)$ comprises the \defn{infinitesimal flexes} of $(G,p)$.
When $G$ is a graph with $n\ge d$ vertices, a $d$-dimensional 
framework $(G,p)$ is called \defn{infinitesimally rigid}
when $R(p)$ has rank $dn - \binom{d+1}{2}$.  Infinitesimal rigidity
implies rigidity \cite{AR78}.

\paragraph{Generic global rigidity}
The main tool we will use in this paper to prove that a graphs are GGR is
the following:
\begin{theorem}[\cite{GHT10}]\label{thm:infggr}
Let $G$ be a graph and $d$ a dimension. Suppose that there is a framework 
$(G,p)$ that is infinitesimally rigid and globally rigid in dimension $d$.  
Then $G$ is GGR in dimension $d$.
\end{theorem}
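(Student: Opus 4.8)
The plan is to prove the contrapositive: assuming $G$ is \emph{not} GGR in dimension $d$, I will transport this failure down to the single framework $(G,p_0)$ and contradict its global rigidity. The two real-analytic inputs that make this possible are (i) compactness over $\RR$, which turns ``admits a non-congruent equivalent partner'' into a \emph{closed} condition, and (ii) infinitesimal rigidity, which prevents a sequence of genuine partners from collapsing to a congruent limit. Throughout I work in the Euclidean topology, and I note that infinitesimal rigidity forces $G$ to be connected and $p_0$ to affinely span $\EE^d$, since otherwise the space of trivial infinitesimal flexes would have dimension exceeding $\binom{d+1}{2}$.

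First I set up the equivalence variety. Let $Z\subseteq(\RR^d)^n\times(\RR^d)^n$ be the set of pairs $(p,q)$ with $\|p(i)-p(j)\|=\|q(i)-q(j)\|$ for every edge $\{i,j\}$ of $G$, and let $Z^\circ$ be its closed subset on which the translational gauge of the second factor is fixed by $\sum_i q(i)=0$. The first-coordinate projection $\pi:Z^\circ\to(\RR^d)^n$ is \emph{proper}: if $p$ ranges over a compact set its edge lengths are bounded, and since $G$ is connected a spanning walk bounds the diameter of any partner $q$, so the centroid-normalized $q$ remain in a compact set. Let $D\subseteq Z^\circ$ be the closed locus of pairs with $q$ congruent to $p$, and set $W:=\overline{Z^\circ\setminus D}$. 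Then $W$ is closed and $\pi|_W$ is proper, hence a closed map.

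Now suppose $G$ is not GGR. By the generic dichotomy of \cite{AR78,GHT10} (global rigidity is a generic property, as stated above), \emph{no} generic framework is globally rigid, so every generic configuration has a non-congruent equivalent partner; after centering that partner it lies in $Z^\circ\setminus D\subseteq W$, whence $\pi(W)$ contains the dense set of generic configurations. Being also closed, $\pi(W)=(\RR^d)^n$, so in particular $p_0\in\pi(W)$: there is $q_0$ with $(p_0,q_0)\in W$, realized as a limit $(p_k,q_k)\to(p_0,q_0)$ of pairs in which $q_k$ is equivalent but not congruent to $p_k$. The crux is to show $q_0$ is not congruent to $p_0$. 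Here infinitesimal rigidity enters: since $\rank R(p_0)=dn-\binom{d+1}{2}$, the squared-length measurement map $m(p)=(\|p(i)-p(j)\|^2)_{\{i,j\}\in E}$ has differential of maximal rank with kernel exactly the tangent to the $\Euc(d)$-orbit, so $m$ descends to an \emph{immersion} $\bar m$ of the quotient $(\RR^d)^n/\Euc(d)$ at $[p_0]$ and is therefore injective on a neighborhood $N$ of $[p_0]$. If $q_0$ were congruent to $p_0$, then $[p_k]\to[p_0]$ and $[q_k]\to[q_0]=[p_0]$ would both lie in $N$ for large $k$, while $m(p_k)=m(q_k)$ forces $\bar m([p_k])=\bar m([q_k])$; injectivity on $N$ then gives $[p_k]=[q_k]$, i.e.\ $q_k$ congruent to $p_k$, contradicting the choice of the sequence. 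Hence $q_0$ is a non-congruent equivalent partner of $p_0$, contradicting global rigidity of $(G,p_0)$; so $G$ is GGR.

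I expect the main obstacle to be making the two topological inputs airtight. The properness of $\pi|_W$ is the real, compactness-based replacement for a generic-fiber argument in algebraic geometry, and it is precisely what lets a statement about \emph{generic} configurations reach the fixed special point $p_0$; one must check the diameter bound via connectivity carefully and confirm $W$ is genuinely closed. The second point is the passage from maximality of $\rank R(p_0)$ to honest local injectivity of $\bar m$ near $[p_0]$, which requires verifying that $\Euc(d)$ acts freely and properly at the affinely spanning configuration $p_0$ so that the quotient is a smooth manifold and the immersion theorem applies.
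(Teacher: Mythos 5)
The paper itself contains no proof of this statement: it is imported verbatim from \cite{GHT10} and used as a black box, so there is no internal argument to compare against. Judged on its own terms, your proof is correct, and its logical shape is the natural one: you reduce the theorem to the generic dichotomy (the other quoted theorem of \cite{AR78,GHT10} in Section 2) plus elementary differential topology. Both essential ingredients check out. Properness of $\pi$ on $Z^\circ$ does follow from connectivity plus the centroid normalization, so $\pi(W)$ is closed and, containing the dense set of generic configurations, must contain $p_0$. More importantly, you correctly identified that plain rigidity of $(G,p_0)$ (which infinitesimal rigidity would already give, by Asimow--Roth) is \emph{not} enough at the limit step, since $q_k$ is equivalent to $p_k$ rather than to $p_0$; what is needed is exactly local injectivity of the measurement map on the quotient near $[p_0]$, and your immersion argument supplies it: maximal rank of $R(p_0)$ means $d\bar m$ is injective at $[p_0]$, the $\Euc(d)$-action is free and proper on affinely spanning configurations, and an immersion at a point is injective on a neighborhood of that point. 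This is essentially the mechanism by which the result is established in the literature --- the pair (globally rigid, infinitesimally rigid) is stable under perturbation of the configuration, which combined with the dichotomy yields the theorem --- with your closed-image packaging of the ``bad set'' $W$ replacing the usual openness-at-$p_0$ packaging; the two are equivalent.

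Two small repairs. First, your parenthetical justification that infinitesimal rigidity forces connectivity and full affine span misspeaks: it is the space of \emph{all} infinitesimal flexes, not the trivial ones, that would exceed $\binom{d+1}{2}$ in the disconnected or non-spanning case (the trivial flexes never exceed that dimension; for a non-spanning configuration they have strictly smaller dimension, while non-trivial flexes appear). Second, the quotient-manifold step genuinely requires $p_0$ to affinely span $\EE^d$, which can fail when $n \le d$: there the stabilizer of $p_0$ in $\Euc(d)$ is nontrivial (e.g.\ reflection through the affine hull), so the free-and-proper hypothesis breaks. You should dispose of the degenerate regime $n \le d$ separately --- there the two hypotheses force $G$ to carry enough edges that the conclusion is elementary --- or simply note that the theorem is only ever applied with $n \ge d+1$, as in this paper.
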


To construct  frameworks that are
globally rigid, we   use the
stronger property of universal rigidity.
A framework $(G,p)$ is \defn{universally rigid} 
if any equivalent framework \emph{in any dimension} 
is congruent. 
One important way to certify that 
a constructed framework is universally rigid is via
the still stronger property of super stability.
To define  this we need a bit more terminology.

\paragraph{Equilibrium stresses}
For a graph $G$, define the space $S(G)$ of \defn{graph supported matrices}
to be the symmetric $n\times n$ matrices that have zeros in the off-diagonal 
entries indexed by non-edges $\{i,j\}$.  A matrix $\Omega\in S(G)$ is a 
\defn{stress matrix} if it has the vector of all ones in its kernel. 
A stress matrix $\Omega$ is an \defn{equilibrium stress matrix} of a 
framework $(G,p)$ if
$\Omega\hatP = 0$.  A computation shows that for each vertex $i$
\[
	\sum_{j\neq i} \Omega_{ij} [p(j) - p(i)] = 0
\]
if and only if a stress matrix $\Omega$ is an equilibrium stress matrix for $(G,p)$.  
Thus, equilibrium stress matrices
are obtained by re-arranging \defn{equilibrium stresses} of $(G,p)$, which 
are vectors $\omega$ in the cokernel of $R(p)$.  

Suppose that $n \ge d$, let $(G,p)$ be a $d$-dimensional framework 
and denote by $m$ the number of edges, $r$ the rank of the rigidity matrix, 
$s$ the dimension of the space of equilibrium stresses and $f$ the 
dimension of infinitesimal flexes.  Linear algebra duality gives
us the Maxwell index theorem:
\begin{equation}\label{eq: maxwell idx}
    m - r = s - f + \binom{d+1}{2}
\end{equation}
We then see that $(G,p)$ is infinitesimally rigid if and 
only if $s = m - dn + \binom{d+1}{2}$.

\paragraph{Super stability}
The \defn{edge directions} of a framework $(G,p)$ is the configuration $e$  
of $|E|$ points at infinity $e(i,j) := p(j) - p(i)$. 
 A framework \defn{has its edge directions on a conic at infinity}
if there is a quadric surface $\QQQ$ at infinity 
containing all of $e$.  A framework
with $d$-dimensional affine span 
is \defn{super stable} if it has a positive semidefinite (PSD) 
equilibrium stress matrix $\Omega$ of rank $n - d - 1$ 
and its edges directions are not on a conic at infinity.  

The main connection between these concepts is due to Connelly.
\begin{theorem}[\cite{C82}]\label{thm:sstour}
If $(G,p)$ is super stable, then it is universally rigid.
\end{theorem}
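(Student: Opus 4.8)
The plan is to run the standard energy argument attached to the certifying stress matrix. Let $\Omega$ be the PSD equilibrium stress matrix of rank $n-d-1$ guaranteed by super stability, and for a configuration $x$ in any dimension $D$, with affine configuration matrix $\hatX$, define the energy
\[
E(x) := \trace(\trans{\hatX}\,\Omega\,\hatX).
\]
Because the last coordinate of every $\hat x(i)$ equals $1$, a short computation (using that $\Omega$ has row sums zero) rewrites $E(x)$ as a weighted sum of the squared edge lengths $\|x(i)-x(j)\|^2$ over the edges of $G$; in particular $E(x)$ depends on $x$ only through its edge lengths. Since $\Omega$ is PSD we have $E(x)\ge 0$ for all $x$, with equality exactly when every column of $\hatX$ lies in $\ker\Omega$.

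First I would record that $p$ is a global minimizer: the equilibrium condition $\Omega\hatP=0$ says precisely that the columns of $\hatP$ lie in $\ker\Omega$, so $E(p)=0$. Now let $(G,q)$ be any framework, in any dimension $D$, equivalent to $(G,p)$. Equivalence means $\|q(i)-q(j)\|=\|p(i)-p(j)\|$ on every edge, and since $E$ sees only edge lengths we get $E(q)=E(p)=0$. Hence every column of $\hatQ$ also lies in $\ker\Omega$. The rank hypothesis enters here: $\dim\ker\Omega = d+1$, while the $d+1$ columns of $\hatP$ are linearly independent — this is exactly the hypothesis that $p$ has $d$-dimensional affine span — and lie in $\ker\Omega$, so they form a basis of it. Consequently $\hatQ = \hatP\,A$ for some $(d+1)\times(D+1)$ matrix $A$; matching the all-ones columns forces $A$ to be affine, so there is a linear map $L:\RR^d\to\RR^D$ and a translation $t$ with $q(i) = L\,p(i) + t$.

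The final and most delicate step upgrades this affine relation to a congruence, and this is where the conic hypothesis does the essential work. Expanding the preserved-length equation $\|L(p(i)-p(j))\|^2 = \|p(i)-p(j)\|^2$ on each edge gives
\[
\trans{(p(i)-p(j))}\,(\trans{L}L - \matid)\,(p(i)-p(j)) = 0 .
\]
Writing $M := \trans{L}L - \matid$, this says that every edge direction $e(i,j)=p(j)-p(i)$ lies on the quadric $\QQQ$ at infinity cut out by $\trans{v}Mv=0$. But super stability assumes the edge directions do \emph{not} lie on any conic at infinity, so the symmetric matrix $M$ must be zero; thus $\trans{L}L=\matid$, i.e.\ $L$ is an isometric embedding, and together with $t$ this exhibits $q$ as a congruent copy of $p$, proving universal rigidity. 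I expect the last implication to be the crux: the energy/kernel argument only determines $q$ up to an affine transformation of $p$, and it is precisely the non-conic condition on the edge directions that eliminates the residual affine (shear and anisotropic-scaling) freedom. The one point requiring care is the bookkeeping that a nonzero $M$ genuinely yields a conic at infinity containing all the edge directions, which is exactly what licenses applying the hypothesis.
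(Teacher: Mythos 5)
Your proof is correct, and it is precisely the classical energy argument of Connelly's paper \cite{C82}, which is the reference this paper cites for the theorem without reproducing a proof. All three stages are handled accurately: the stress energy $\trace(\trans{\hatX}\Omega\hatX)$ depending only on edge lengths, the PSD-plus-rank argument forcing any equivalent $\hatQ$ to be an affine image $\hatP A$ of $\hatP$, and the non-conic condition on the edge directions forcing $\trans{L}L=\matid$ and hence congruence.
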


%%%%%%%%%%%%%%%%%%%%%%%%%%%%%%%%
\subsection{Bipartite graphs and partitioned point sets}
%%%%%%%%%%%%%%%%%%%%%%%%%%%%%%%%

We are interested in graphs $G$ that are simple and bipartite, 
with vertex partition $U$, $V$ and edge set $E$.  
We denote by $u$ and $v$ the size of $U$ 
and $V$, respectively, and the total number of vertices by $n := u + v$.
The number of edges is $m := |E|$.

For notational convenience, we denote a configuration of 
the vertices of a bipartite graph
by a pair of mappings $p : U \to \EE^d$ and $q : V\to \EE^d$,
and a framework on a bipartite graph by $(G,p,q)$.  All the 
other definitions discussed in the previous section for point 
configurations extend naturally to partitioned point configurations
$(p,q)$.

%%%%%%%%%%%%%%%%%%%%%%%%%%%%%%%%
\subsection{General position and quadric separability}
%%%%%%%%%%%%%%%%%%%%%%%%%%%%%%%%
We say that a point set $p$ of at least $d+1$ points 
in dimension $d$ is in 
\defn{(affine) general position} if 
any $d+1$ of the points are affinely independent.
This is equivalent to any $d+1$ of the vectors in $\hat p$
being linearly independent.

Fix a dimension $d\in \NN$ and set $D = \binom{d+2}{2} - 1$.
Let $\cV : \RR^{d+1}\to \RR^{D+1}$ denote the degree $2$ 
homogeneous \defn{Veronese map}, which is defined by $x \to x\trans{x}$.
This is well-defined, since the image is a subset of
symmetric $(d+1)\times (d+1)$ matrices, which are
naturally identified with $\RR^{D + 1}$ by a suitable choice
of coordinates.  
We give this $\RR^{D+1}$ the trace inner product 
\[
    \iprod{X}{Y} = \operatorname{Tr}(XY)
\]

We define the action of $\cV$ on $x\in \EE^d$ by 
$\cV(x) = \hatx\trans{\hatx}$.  This extends 
to an action $\cV(p)$ on point configurations including 
partitioned point configurations.
For $x\in \EE^d$, $\cV(x)$ is a matrix with a 
$1$ in the bottom right corner.  Thus, 
we can view $\cV$ as mapping $\EE^d$ to 
a $D$-dimensional affine space, which we denote 
by $\AA^D$.

The inner product described above identifies 
$\left(\RR^{D+1}\right)^*$ with quadratic polynomials 
on $\EE^d$, since, if $Q$ is a $(d+1)\times (d+1)$ symmetric 
matrix, 
\[
    \trans{\hatx} Q \hatx = \operatorname{Tr}\left((\hatx\trans{\hatx})Q\right)
    = \iprod{\cV(x)}{Q}
\]
We note that the identification implies the following, which we need later:
\begin{lemma}\label{lem: veronese nondegen}
Let $d\in \NN$ be a dimension.  Then $\cV(\EE^d)$ affinely spans
$\AA^D$.
\end{lemma}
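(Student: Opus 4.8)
The plan is to use affine duality: the affine span of a set $S\subseteq \RR^{D+1}$ is exactly the common zero locus of all affine-linear functionals that vanish on $S$. I will show that every affine-linear functional vanishing on $\cV(\EE^d)$ is a scalar multiple of the single functional whose zero locus is $\AA^D$; the common zero locus is then $\AA^D$ itself, which is the claim.

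Via the trace inner product, an affine-linear functional on $\RR^{D+1}$ can be written as $M\mapsto \iprod{M}{Q} + c$ for a symmetric $(d+1)\times(d+1)$ matrix $Q$ and a scalar $c$. By the identification recalled just before the lemma, its value on $\cV(x) = \hatx\trans{\hatx}$ is $\trans{\hatx}Q\hatx + c$, a quadratic polynomial in the coordinates of $x$. Such a functional vanishes on all of $\cV(\EE^d)$ precisely when $\trans{\hatx}Q\hatx = -c$ for every $x\in\EE^d$.

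Next I would expand this in block form. Writing
\[
    Q = \begin{pmatrix} A & b \\ \trans{b} & \gamma \end{pmatrix}
\]
with $A$ a symmetric $d\times d$ block, $b\in\RR^d$ and $\gamma\in\RR$, and substituting $\hatx=(\trans{x},1)$, one gets $\trans{\hatx}Q\hatx = \trans{x}Ax + 2\trans{b}x + \gamma$. For this to be the constant $-c$ for all $x\in\RR^d$, the degree-$2$ part $\trans{x}Ax$ and the degree-$1$ part $2\trans{b}x$ must vanish identically, forcing $A=\matzero$ and $b=\veczero$, while $\gamma=-c$. Hence $Q$ is a multiple of the matrix with a single $1$ in the bottom-right entry, and the functional is a multiple of $M\mapsto M_{d+1,d+1}-1$.

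This is exactly the functional cutting out $\AA^D$, so the common zero locus of all functionals vanishing on $\cV(\EE^d)$ is $\AA^D$, which completes the argument. I expect no genuine obstacle here beyond the bookkeeping; the only point worth stating carefully is the dimension match, namely that the quadratic polynomials $\trans{\hatx}Q\hatx$ form a space of dimension $\binom{d+2}{2}=D+1$, so that after removing the nonconstant part exactly a one-dimensional family of relations survives, matching the single relation that defines $\AA^D$.
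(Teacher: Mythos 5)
Your proof is correct and rests on exactly the same idea as the paper's: the identification (via the trace pairing) of affine functionals on $\RR^{D+1}$ with quadratic polynomials on $\EE^d$, together with the fact that a polynomial vanishing identically on $\RR^d$ must have all coefficients zero. The paper phrases this as a one-line contradiction (``a defective span would give a non-zero quadratic polynomial vanishing on all of $\EE^d$''), while you run the same duality forward with an explicit block computation; the content is identical.
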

\begin{proof}
If $\cV(\EE^d)$ has defective affine span, then there is a non-zero
quadratic polynomial vanishing on all of $\EE^d$, which is 
impossible.
\end{proof}

A partitioned point configuration 
$(p,q)$ is \defn{strictly quadratically separable} if $\cV(p)$ 
and $\cV(q)$ are strictly  separable 
by an (affine) hyperplane in $\AA^{D}$.  This is equivalent to there being a 
quadric surface $\QQQ$ in $\EE^d$ strictly 
separating the points of $p$ from those of $q$.

We recall that  $\cV(p)$ and $\cV(q)$ are not strictly  
separable by a hyperplane in $\AA^D$, if and only if their 
convex hulls $\conv(\cV(p))$ and $\conv(\cV(q))$ 
have non-empty intersection.

%%%%%%%%%%%%%%%%%%%%%%%%%%%%%%%%
\section{Super-stable realizations of $K_{d+1,d+1}$}\label{sec:kd+1d+1}
%%%%%%%%%%%%%%%%%%%%%%%%%%%%%%%%

In this section we construct super stable realizations 
of $K_{d+1,d+1}$ in dimension $d$ with some additional
properties. 
\begin{prop}\label{prop:Kd+1d+1}
For each $d$, there are $(p,q)$ such that: (a) $p$ and $q$ both have 
$d$-dimensional affine span in $\EE^d$; (b) $\cV(p,q)$ has 
$2d+1$-dimensional linear span in $\RR^{D+1}$; (c) $(K_{d+1,d+1},p,q)$ 
is super stable.
\end{prop}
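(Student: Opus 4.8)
The plan is to exhibit a single, highly symmetric realization and verify (a)--(c) by hand. For $d\ge 2$, take $a_1,\dots,a_{d+1}\in\EE^d$ to be a simplex in general position whose centroid is the origin, so that $\sum_i a_i = \veczero$, and set $p(i) = a_i$ and $q(i) = -a_i$; thus $(p,q)$ is the central reflection of a simplex (general position keeps all $2d+2$ points distinct). For the stress I would try the symmetric ansatz $\Omega = \begin{pmatrix}\matid & W\\ W & \matid\end{pmatrix}$ with $W = \matid - \tfrac{2}{d+1}\matone$, and check directly that $\Omega\hatP = 0$: the all-ones column of $\hatP$ lies in $\ker\Omega$ because $W\vecone = -\vecone$, and each spatial column lies in $\ker\Omega$ because $\matid - W = \tfrac{2}{d+1}\matone$ annihilates the centered configuration (this is exactly where $\sum_i a_i = \veczero$ is used).

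The key computation, which I expect to be the technical heart, is the spectrum of $\Omega$. Since $W$ has eigenvalue $-1$ on $\vecone$ and $+1$ on $\vecone^{\perp}$, the block structure gives $\Omega$ the eigenvalues $1\pm\mu$ as $\mu$ ranges over the spectrum of $W$; hence $\Omega$ has eigenvalue $0$ with multiplicity $d+1$ and eigenvalue $2$ with multiplicity $d+1$. Therefore $\Omega$ is PSD of rank $d+1 = n-d-1$, exactly the rank required for super stability, and since $\hatP$ already contributes $d+1$ independent kernel vectors we get $\ker\Omega = \operatorname{colspace}(\hatP)$ with no spurious flexes. This settles the hardest part of (c).

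For (a), both $p$ and $q$ are simplices and so each affinely spans $\EE^d$. For the edge-direction condition in (c), I would reduce ``edges on a conic at infinity'' to the existence of a nonzero symmetric $\Phi$ with $\trans{(a_i+a_j)}\Phi(a_i+a_j) = 0$ for all $i,j$; expanding and using the diagonal cases $i=j$ forces $\trans{a_i}\Phi a_j = 0$ for every pair, i.e. $A\Phi\trans{A} = 0$ where $A$ has rows $\trans{a_i}$. As the centered simplex has $A$ of full column rank $d$, this forces $\Phi = 0$, so the edge directions are not on a conic and $(K_{d+1,d+1},p,q)$ is super stable.

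It remains to pin down the span in (b). Writing a general linear relation $\sum_i\lambda_i\cV(a_i) + \sum_j\mu_j\cV(-a_j) = 0$ and splitting into blocks yields three conditions: $\sum_i(\lambda_i+\mu_i)\,a_i\trans{a_i} = 0$, $\sum_i(\lambda_i-\mu_i)\,a_i = \veczero$, and $\sum_i(\lambda_i+\mu_i) = 0$. The first collapses once one knows the rank-one matrices $\{a_i\trans{a_i}\}_{i=1}^{d+1}$ are linearly independent --- which for $d\ge 2$ holds generically and is certified for the regular simplex by the nonvanishing of the Gram determinant $\det[(a_i\cdot a_j)^2]$ --- and then the remaining two conditions leave exactly the one-dimensional family $\lambda_i \equiv -\mu_i \equiv \text{const}$, so $\cV(p,q)$ spans a space of dimension $(2d+2)-1 = 2d+1$. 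The only case this argument misses is $d=1$, where $\{a_i\trans{a_i}\}$ cannot be independent and the reflection degenerates; there I would instead place $K_{2,2}$ at the interleaved configuration $p=\{1,3\}$, $q=\{0,2\}$, where $\conv(\cV(p))$ and $\conv(\cV(q))$ cross, and check the (now one-dimensional) stress space is PSD of rank $2$ directly. The recurring obstacle is arranging PSD-ness and the exact rank $n-d-1$ at the same time; the central reflection is chosen precisely so that the unique stress diagonalizes into the two-eigenvalue pattern above, turning that obstacle into a one-line spectral check.
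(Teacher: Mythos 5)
Your proof is correct, but it takes a genuinely different route from the paper's. The paper places $(p,q)$ alternating along the moment curve: Bezout's theorem (Lemma \ref{lem:unsep}) forces the two Veronese hulls to meet, the rational-normal-curve structure of $\cV(\CCC_d)$ (Lemma \ref{lem:gp}) gives (a), (b) and upgrades the intersection to one of relative interiors, and super stability is then imported wholesale from the Connelly--Gortler theorem (Theorem \ref{thm:bipartite}). You instead verify super stability directly against its definition: the centrally symmetric configuration $q=-p$ admits the explicit stress matrix $\Omega=\bigl(\begin{smallmatrix}\matid & W\\ W & \matid\end{smallmatrix}\bigr)$ with $W=\matid-\tfrac{2}{d+1}\matone$, whose spectrum $\{0,2\}$, each with multiplicity $d+1$, you compute by hand; the conic-at-infinity condition and part (b) reduce to rank statements about the vertex matrix $A$ and the Gram matrix $[(a_i\cdot a_j)^2]$. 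I checked the details: $\Omega\hatP=0$, PSD of rank $n-d-1$, the forcing of $\Phi=0$, the regular-simplex certificate for $d\ge 2$, and the interleaved $K_{2,2}$ repair for $d=1$ (where $q=-p$ degenerates into coincident points) all hold. What you gain is a self-contained, certificate-style proof needing neither Theorem \ref{thm:bipartite} nor Bezout. What the paper's construction buys, and yours does not, is general position of the core (part of Lemma \ref{lem:gp}): for $d\ge 3$ your points $a_1,a_2,-a_1,-a_2$ lie in a common $2$-plane, so $(p,q)$ is not in general position. That property is not in the Proposition's statement, but the proof of Theorem \ref{thm: bipartite GGR} invokes it (``the core is realized in general position'') when computing $\rank(\Gale{p})\rank(\Gale{q})$; with your core that step must instead cite the sufficient fact that $p$ and $q$ each affinely span $\EE^d$, which your (a) provides, so the downstream argument survives with a one-line change. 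Finally, the paper's curve technique is uniform in $d$ and extends to larger part sizes, whereas the reflection trick is tailored to $K_{d+1,d+1}$ and needs the separate $d=1$ case.
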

The key rigidity theoretic tool we need 
is a  result of Connelly and Gortler.
\begin{theorem}[\cite{CG15}]\label{thm:bipartite}
Let $u\ge v\ge 1$.  If $\conv(\cV(p))$ and $\conv(\cV(q))$
intersect in their relative interiors,  then $(K_{u,v},p,q)$ 
is super stable.  If $\conv(\cV(p))$ and $\conv(\cV(q))$ are 
disjoint, then $(K_{u,v},p,q)$ is not universally rigid.
\end{theorem}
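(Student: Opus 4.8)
The statement has two halves; the substantive one is the first (intersecting relative interiors $\Rightarrow$ super stable), and the plan is to prove it by writing down an \emph{explicit} positive semidefinite equilibrium stress matrix of the correct rank. The second half (disjoint hulls $\Rightarrow$ not universally rigid) I would obtain from the separating-hyperplane theorem together with a Connelly-type flex. Throughout I assume $p$ (equivalently, by the identity below, both $p$ and $q$) affinely spans $\EE^d$: a common point of the two relative interiors forces the Gram matrix $M$ below to be positive definite exactly in this nondegenerate case, and the degenerate case reduces to a lower ambient dimension.

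\emph{First half.} A point $z\in\relint(\conv(\cV(p)))\cap\relint(\conv(\cV(q)))$ can be written with strictly positive coefficients on every vertex, so there are $\alpha_i,\beta_j>0$ with $\sum_i\alpha_i=\sum_j\beta_j=1$ and
\[
    M \;:=\; \sum_i \alpha_i\,\hat{p}(i)\trans{\hat{p}(i)} \;=\; \sum_j \beta_j\,\hat{q}(j)\trans{\hat{q}(j)}.
\]
Collect the homogeneous coordinates into the columns of $P=[\hat{p}(i)]_i$ (size $(d+1)\times u$) and $Q=[\hat{q}(j)]_j$ (size $(d+1)\times v$), and set $A=\operatorname{diag}(\alpha_i)$, $B=\operatorname{diag}(\beta_j)$, so that $M=PA\trans{P}=QB\trans{Q}\succ 0$. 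The plan is to exhibit the stress-coefficient matrix
\[
    W \;=\; A\,\trans{P}\,M^{-1}\,Q\,B \qquad (\text{size } u\times v)
\]
and the associated bipartite stress matrix $\Omega=\left(\begin{smallmatrix} A & -W\\ -\trans{W} & B\end{smallmatrix}\right)$. Using that every $\hat{p}(i)$ and $\hat{q}(j)$ has last coordinate $1$, one gets $M\hat{e}=\sum_i\alpha_i\hat{p}(i)=\sum_j\beta_j\hat{q}(j)$ for the last standard basis vector $\hat{e}=\trans{(0,\dots,0,1)}$, whence $W\vecone=\trans{(\alpha_i)}_i$ and $\trans{W}\vecone=\trans{(\beta_j)}_j$; so the diagonal blocks $A,B$ are exactly $\operatorname{diag}(W\vecone)$ and $\operatorname{diag}(\trans{W}\vecone)$, certifying that $\Omega$ is a genuine graph-supported stress matrix. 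The identities $W\trans{Q}=A\trans{P}$ and $PW=QB$ (both immediate from $M=PA\trans{P}=QB\trans{Q}$) then give the equilibrium $\Omega\hat{P}=0$. These are routine verifications.

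It remains to check $\Omega\succeq 0$ with $\rank\Omega=n-d-1$, and the non-conic condition. Writing $M=L\trans{L}$ and whitening, $\Omega=\operatorname{diag}(A^{1/2},B^{1/2})\,\Omega_0\,\operatorname{diag}(A^{1/2},B^{1/2})$ with $\Omega_0=\left(\begin{smallmatrix} \matid & -X\trans{Y}\\ -Y\trans{X} & \matid\end{smallmatrix}\right)$, where $X=A^{1/2}\trans{P}L^{-\top}$ and $Y=B^{1/2}\trans{Q}L^{-\top}$ have orthonormal columns ($\trans{X}X=\trans{Y}Y=\matid$, since $\trans{X}X=L^{-1}ML^{-\top}=\matid$). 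Then $\Omega_0\succeq 0$ because its quadratic form on $(s,t)$ is $\|s\|^2+\|t\|^2-2\iprod{\trans{X}s}{\trans{Y}t}$ and $\trans{X},\trans{Y}$ are contractions; its kernel is $\{(Xa,Ya):a\in\RR^{d+1}\}$, of dimension $d+1$, so $\rank\Omega=(u+v)-(d+1)=n-d-1$. Undoing the whitening identifies $\ker\Omega$ with the column space of $\hat{P}$, as a rank-$(n-d-1)$ PSD stress requires. For super stability I would then separately rule out edge directions on a conic at infinity: for $K_{u,v}$ this amounts to showing no nonzero symmetric $C$ satisfies $\trans{(q(j)-p(i))}C(q(j)-p(i))=0$ for all $i,j$, i.e.\ that $\trans{p(i)}Cq(j)$ is not additively separable in $i$ and $j$; I expect this to follow from the full affine spans of $p$ and $q$ (equivalently, from $M\succ 0$), and this is the one place where the nondegeneracy input is genuinely used.

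\emph{Second half.} If the hulls are disjoint, the separating-hyperplane theorem yields a nonzero quadric, i.e.\ a symmetric $C$ with $\iprod{\cV(p(i))}{C}>0>\iprod{\cV(q(j))}{C}$ for all $i,j$. Since $K_{u,v}$ constrains only the cross distances $\|p(i)-q(j)\|$ and leaves all within-part distances free, the sign pattern of $C$ encodes a direction in which the configuration can be deformed while preserving every edge length; the plan is to integrate this into an explicit equivalent-but-non-congruent framework (in the given dimension or one higher), certifying that $(K_{u,v},p,q)$ is not universally rigid. The guiding principle is the duality already visible above: a full-rank PSD stress exists exactly when the hulls touch, so strict separation obstructs such a certificate and instead produces a flex. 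Constructing this deformation and confirming it is not a congruence is the geometric crux of this half, and—together with the conic-at-infinity check—is where I expect the real work to lie; by contrast, the stress matrix of the first half is pure linear algebra once the common point $z$ is in hand.
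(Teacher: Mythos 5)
First, a point of order: this paper never proves Theorem~\ref{thm:bipartite} at all---it is quoted from Connelly and Gortler \cite{CG15} and used as a black box. So there is no internal proof to compare against; your proposal is an attempt to reprove the main results of \cite{CG15}, and that is the standard it must meet.

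Your first half is sound as far as it goes, and it is essentially the construction of \cite{CG15}: write the common relative-interior point with strictly positive weights, set $M = \sum_i \alpha_i \hatp(i)\trans{\hatp(i)} = \sum_j \beta_j \hatq(j)\trans{\hatq(j)}$, and build $\Omega$ from $W = A\trans{P}M^{-1}QB$. Your verifications (row sums, $\Omega\hatP = 0$, the whitening argument giving $\Omega \succeq 0$ with kernel exactly the column span of $\hatP$, hence rank $n-d-1$) are correct, and the reduction to the spanning case is legitimate because positive weights force $\aff(p) = \aff(q)$. The gap here is the conic-at-infinity condition, which you only say you ``expect'' to hold: that is a real step, not a formality, and super stability is not established without it. It can be filled as follows. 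The condition $\trans{(q(j)-p(i))}\,C\,(q(j)-p(i)) = 0$ for all $i,j$ reads $\trans{P_0}CQ_0 = f\trans{\vecone} + \vecone\trans{g}$, where $P_0,Q_0$ are the $d\times u$ and $d\times v$ coordinate matrices, $f_i = \trans{p(i)}Cp(i)$, $g_j = \trans{q(j)}Cq(j)$. Since both parts affinely span $\EE^d$, the matrices $P_0,Q_0$ have rank $d$, so $\rank C = \rank(\trans{P_0}CQ_0) \le 2$. A rank-one or semidefinite rank-two $C$ forces all points into a common hyperplane or codimension-two flat; an indefinite rank-two $C$ factors as a product of two linear forms, and a short combinatorial argument (for every pair $i,j$, either the first forms agree or the second forms agree) forces $p$ into a union of two parallel codimension-two flats, hence into a hyperplane---contradicting the spanning hypothesis.

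The second half, by contrast, is a genuine gap, not a proof. Strict separation of the two compact hulls does produce a symmetric $S$ with $\iprod{\cV(p(i))}{S} > 0 > \iprod{\cV(q(j))}{S}$, but your assertion that this sign pattern ``encodes a direction in which the configuration can be deformed while preserving every edge length'' is exactly the theorem to be proved, and the naive implementations fail. For instance, lifting each $p(i)$ to $\bigl(p(i), \sqrt{t\,h(p(i))}\bigr)$ with $h(x) = \iprod{\cV(x)}{S}$ changes $\|p(i)-q(j)\|^2$ by $t\,h(p(i))$, and no motion of the $q(j)$ can cancel this: the needed correction is quadratic in $p(i)$, while moving $q(j)$ contributes only terms affine in $p(i)$. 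Constructing an equivalent, non-congruent framework from a separating quadric is the substantive content of this direction in \cite{CG15}. Moreover, the ``duality'' heuristic you lean on---no rank-$(n-d-1)$ PSD stress, hence a flex---is not valid reasoning: super stability is sufficient but not necessary for universal rigidity, so the absence of such a certificate proves nothing by itself. As written, the second statement of the theorem remains unproven in your proposal.
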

The rest of this section builds the proof of Proposition 
\ref{prop:Kd+1d+1} in stages.

\paragraph{Not quadratically separable}
Let $\CCC$ be a curve in $\EE^d$. We assume there is a parameterization
$f_\CCC(t)$.  Suppose that $(p,q)$ is on $\CCC$, and define $(p,q)$
to be \defn{alternating on $\CCC$} if there are $s_i$ and $t_j$ such that
$s_1 <  t_1 <  s_2 <  \cdots$, with $p(i) = f_\CCC(s_i)$ and $q(j) = f_\CCC(t_j)$. 

\begin{lemma}\label{lem:unsep}
If $u\ge v\ge d+1$, and $(p,q)$ is alternating on a degree $d$ curve $\CCC$, then $(p,q)$ is not strictly quadratically separable.
\end{lemma}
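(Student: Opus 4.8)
The plan is to argue by contradiction, reducing strict quadratic separability to the existence of a single separating quadratic polynomial, restricting that polynomial to the curve $\CCC$, and then counting roots against the degree bound coming from $\CCC$.

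First I would unwind the definitions. Suppose $(p,q)$ were strictly quadratically separable. By the equivalence recorded just before the statement, there is a quadric strictly separating $p$ from $q$; concretely, a separating affine hyperplane in $\AA^D$ has the form $\iprod{X}{Q} = c$ for a symmetric matrix $Q$ and a constant $c$. Using the identification $\iprod{\cV(x)}{Q} = \trans{\hatx}Q\hatx$ of symmetric matrices with quadratic polynomials, and absorbing $c$ into the bottom-right entry of $Q$ (which is legitimate because $\cV(x)$ has a $1$ in that slot), this yields a single quadratic polynomial $Q$ on $\EE^d$ with, after possibly swapping the roles of $p$ and $q$, $Q(p(i)) > 0$ for every $i$ and $Q(q(j)) < 0$ for every $j$.

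Next I would form the univariate restriction $g(t) := Q(f_\CCC(t))$. Since $\CCC$ has degree $d$, its parameterization $f_\CCC$ has coordinate functions of degree at most $d$ in $t$, so composing with the quadratic $Q$ produces a polynomial $g$ of degree at most $2d$. The alternating hypothesis supplies parameters $s_1 < t_1 < s_2 < t_2 < \cdots$ with $p(i) = f_\CCC(s_i)$ and $q(j) = f_\CCC(t_j)$; since $u \ge v \ge d+1$, I can isolate the initial interleaved block $s_1 < t_1 < s_2 < \cdots < s_{d+1} < t_{d+1}$, which lists $2d+2$ parameter values. On this block $g$ is strictly positive at each $s_i$ and strictly negative at each $t_j$, so $g$ changes sign across each of the $2d+1$ consecutive gaps, forcing at least one root in each of these $2d+1$ disjoint open intervals.

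Hence $g$ has at least $2d+1$ distinct real roots while having degree at most $2d$, so $g \equiv 0$; but then $Q(p(1)) = g(s_1) = 0$, contradicting $Q(p(1)) > 0$. This contradiction establishes that $(p,q)$ is not strictly quadratically separable. The one step needing care is the bound $\deg g \le 2d$, which is precisely where the degree-$d$ hypothesis on $\CCC$ is used; the count is then tight, as the $2(d+1)$ alternating points yield exactly $2d+1$ sign changes against a root budget of $2d$, which also explains the threshold $v \ge d+1$.
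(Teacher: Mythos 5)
Your proof is correct and rests on the same counting idea as the paper's, but it executes the key step differently, and the difference is worth recording. Both arguments play the $2d+1$ sign alternations forced by the interleaved points against an upper bound of $2d$ on how often a quadric can cross $\CCC$. The paper gets the upper bound by citing Bezout's theorem for the degree-$d$ curve $\CCC$: either $|\CCC \cap \QQQ| \le 2d$ or $\QQQ$ contains a component of $\CCC$, and your $g \equiv 0$ branch is exactly that second alternative. You instead pull the separating quadratic back along the parameterization and count real roots of the univariate polynomial $g = Q \circ f_\CCC$, which makes the lemma elementary and self-contained (intermediate value theorem plus a degree count), with no algebraic geometry cited. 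What the paper's route buys in exchange is generality: it needs only the algebraic degree of $\CCC$, whereas your step ``$\CCC$ has degree $d$, hence the coordinates of $f_\CCC$ have degree at most $d$ in $t$'' is not automatic. The degree of a curve is its intersection number with a generic hyperplane; this coincides with the coordinate degree of $f_\CCC$ only when the parameterization is polynomial and proper. A degree-$d$ curve may instead have a rational parameterization (a circle, for $d=2$), in which case $g$ is a rational function $N/h^{2}$ and you must observe that its sign agrees with that of its numerator $N$, with $\deg N \le 2d$; an improper (multiply covering) parameterization would genuinely break the bound $\deg g \le 2d$. None of this affects the paper's application, since the lemma is only ever invoked for the moment curve $t \mapsto (t, t^2, \dots, t^d)$, whose parameterization is polynomial, proper, and of degree exactly $d$; but to cover the lemma as literally stated you should either add the rational-function patch or note the implicit hypothesis on $f_\CCC$.
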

\begin{proof}
The alternating property implies that any separating quadric 
$\QQQ$ would have to intersect $\CCC$ transversely in at least $2d+1$ points.  Since $\CCC$ is degree $d$, 
Bezout's Theorem implies that, 
for any quadric $\QQQ$, either $|\CCC\cap \QQQ| \le 2d$ or 
$\QQQ$ contains a component of $\CCC$. Either case 
contradicts strict separation.
\end{proof}

\paragraph{General position}
We will place
$(p,q)$ so that it alternates along the 
\defn{rational normal curve} $\CCC_d$, which is the
projective counterpart to the moment curve.  For $s, t $ real, 
\[
  [s : t] \mapsto [t^d : t^{d-1}s : \cdots : ts^{d-1} : s^d].
\]
The part of $\CCC_d$ in $\EE^d$ 
is the moment curve, obtained by setting $s=1$.

The rational normal curve $\CCC_d$ is characterized by the property that 
any $d+1$ points on it are affinely independent.  
We need a similar statement for the re-embedded curve $\cV(\CCC_d)$, which 
has the parameterization 
\[
     [s : t] \mapsto \cV([t^d : t^{d-1}s : \cdots : ts^{d-1} : s^d])
\]
It is immediate that the degree of $\cV(\CCC_d)$ is $2d$.  Looking more 
closely, we see that, in fact, $\cV(\CCC_d)$ is a rational normal curve 
in its affine span.\footnote{We thank Jessica Sidman for suggesting this approach.}
The $ij$th entry of $\cV([t^d : t^{d-1}s : \cdots : ts^{d-1} : s^d])$
is 
\[
    t^{d - i + 1}s^{i-1}t^{d - j + 1}s^{j-1} = t^{2d - i - j + 2}s^{i+j - 2}
\]
Since every entry is determined by $i + j$, there are $2d+1$ 
distinct entries, and any set of these parameterizes a rational normal 
curve of degree $2d$.
From this, we see that any $2d+1$ points on $\cV(\CCC_d)\cap \AA^D$
are affinely independent.

\begin{lemma}\label{lem:gp}
Suppose that 
that $(p,q)$ is alternating along the 
$d$-dimensional rational normal curve and 
each of $p$ and $q$ have $d+1$ points in $\EE^d$.
Then $p$ and $q$ are in affine 
general position in $\EE^d$ and, under the 
Veronese map, any $2d + 1$ 
points of $(p,q)$ have $2d$-dimensional affine 
span in $\AA^{D}$.
\end{lemma}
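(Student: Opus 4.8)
The plan is to prove the two claims of Lemma~\ref{lem:gp} separately, using the two general-position facts established just above the statement: namely that any $d+1$ points on $\CCC_d$ are affinely independent in $\EE^d$, and that any $2d+1$ points on $\cV(\CCC_d)\cap\AA^D$ are affinely independent in $\AA^D$. The hypothesis that $(p,q)$ alternates along $\CCC_d$ guarantees in particular that the $s_i$ and $t_j$ are \emph{distinct} real parameter values, so the $2(d+1)$ points of $(p,q)$ are distinct points of the moment curve.

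\medskip
\noindent\textbf{First claim (general position of $p$ and $q$).}
First I would observe that each of $p$ and $q$ consists of $d+1$ distinct points lying on $\CCC_d$. Since $p$ and $q$ each have exactly $d+1$ points, affine general position of the set (in the sense defined in the paper, any $d+1$ of the points affinely independent) amounts to the single assertion that the full set of $d+1$ points is affinely independent. This is immediate from the characterizing property of the rational normal curve recalled above: any $d+1$ points on $\CCC_d$ are affinely independent. The same argument applies verbatim to $q$. So this part is a one-line deduction.

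\medskip
\noindent\textbf{Second claim (span under the Veronese map).}
Next I would take any $2d+1$ of the $2(d+1)$ points of $(p,q)$ and consider their images under $\cV$. Since the chosen points are distinct points of the moment curve, their images are $2d+1$ distinct points of $\cV(\CCC_d)\cap\AA^D$. By the fact established above, any $2d+1$ points of $\cV(\CCC_d)\cap\AA^D$ are affinely independent; an affinely independent set of $2d+1$ points has affine span of dimension $2d$. This gives exactly the asserted $2d$-dimensional affine span.

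\medskip
\noindent\textbf{Main obstacle.}
The genuine mathematical content has already been front-loaded into the paragraph preceding the lemma, where $\cV(\CCC_d)$ is identified as a rational normal curve of degree $2d$ in its $2d$-dimensional affine span and the affine-independence property is extracted. Given those facts, the lemma itself is essentially bookkeeping. The only point that requires a moment's care — and the thing I would be sure to state explicitly — is that the alternating hypothesis forces all $2(d+1)$ parameter values $s_1<t_1<s_2<\cdots$ to be distinct, so that any sub-collection of $2d+1$ of the points really does consist of $2d+1$ \emph{distinct} points of $\cV(\CCC_d)$; the affine-independence fact is a statement about distinct points, and a naive application to a multiset with a repeated point would fail. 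Everything else follows directly from the two independence properties and the elementary fact that $k+1$ affinely independent points span a $k$-dimensional affine subspace.
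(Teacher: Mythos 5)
Your proposal is correct and takes essentially the same approach as the paper: the paper gives no separate proof of Lemma~\ref{lem:gp}, treating it as an immediate consequence of the two facts established just before its statement (any $d+1$ points of $\CCC_d$ are affinely independent, and any $2d+1$ points of $\cV(\CCC_d)\cap\AA^D$ are affinely independent), which is exactly the deduction you spell out. Your explicit remark that the alternating hypothesis forces the parameter values, and hence the points and their Veronese images, to be distinct is a small but worthwhile addition that the paper leaves implicit.
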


\paragraph{Proof of Proposition \ref{prop:Kd+1d+1}}
Parts (a) and (b) are Lemma \ref{lem:gp}.

For part (c), we know from Lemma \ref{lem:unsep} and the 
alternating pattern, that $P = \conv(\cV(p))$ and 
$Q = \conv(\cV(q))$ have non-empty intersection.  To conclude 
super-stability from Theorem \ref{thm:bipartite}, 
we need that $P$ and $Q$ intersect in their relative 
interiors.  

Suppose the contrary for a contradiction.
Let $P'$ and $Q'$ be maximal faces of 
$P'$ and $Q'$ that meet in their relative interiors. Necessarily, 
$P'$ and $Q'$ are proper, so they span at most $2d$ points in total.
Writing any $x\in P'\cap Q'$ as a convex combination of 
vertices of $P'$ and $Q'$, respectively, shows that these 
points are affinely dependent in $\AA^D$.  This 
contradicts Lemma \ref{lem:gp}. Hence we conclude that $P$ and 
$Q$ intersect in their relative interiors, as desired.
\eop

%%%%%%%%%%%%%%%%%%%%%%%%%%%%%%%
\section{The proof}\label{sec:mainproof}
%%%%%%%%%%%%%%%%%%%%%%%%%%%%%%%
For the rest of this section let $d$, $m$ and $n$ 
be as in the statement.  
We refer to the subgraph induced by the first $d+1$ vertices 
in each part as the 
\defn{core} of $K_{m,n}$, and denote by $U_1$ and $V_1$ 
the vertices of the core.  
For notational convenience, we also 
denote by $p(U')$ and $q(V')$ the sub-configurations of $p$ and
$q$ indexed by $U'\subseteq U$ and $V'\subseteq V$.

The proof strategy is to start from Proposition \ref{prop:Kd+1d+1}  
to produce a configuration $(p,q)$ such that 
$(G,p,q)$ is infinitesimally rigid and 
globally rigid.  The desired statement then follows from Theorem \ref{thm:infggr}.  

\subsection{The Bolker-Roth stress decomposition}
We will use 
results of Bolker and Roth \cite{BR80}
on the stresses of 
complete bipartite graphs.
Denote by $\Gale{p}$ the \defn{Gale dual} (see, e.g., \cite[Section 6.3]{Z95}) 
of the vector configuration $\hatp$.  The \defn{rank} of $\Gale{p}$ is equal 
to the dimension of the cokernel of the configuration matrix $\hatP$.  We similarly define the rank of $\Gale{(\cV(p,q))}$ in terms of the images in 
$\RR^{D+1}$ of $p$ and $q$ under $\cV$.
\begin{theorem}[\cite{BR80}]\label{thm:bolker-roth}
Suppose that $p$ and $q$ both have full affine span in $\EE^d$.  The 
the dimension of the space of equilibrium-stresses of $(K_{u,v},p,q)$ is 
given by
\(
	\rank(\Gale{p})\rank(\Gale{q}) + \rank(\Gale{(\cV(p,q))})
\).
\end{theorem}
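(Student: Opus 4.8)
The plan is to exhibit the equilibrium-stress space as an extension of the cokernel appearing in the third summand by the ``product stresses'' counted by the first two factors, and then conclude by rank–nullity. Record an equilibrium stress as the $u\times v$ array $W=(\omega_{ij})$, let $\hatP$ (size $u\times(d+1)$) and $\hatQ$ (size $v\times(d+1)$) be the configuration matrices of $p$ and $q$, which have full column rank $d+1$ by the full-affine-span hypothesis, and set $c:=W\vecone_v\in\RR^u$ and $d:=\trans{W}\vecone_u\in\RR^v$ to be the vectors of row and column sums. The equilibrium conditions $\sum_j\omega_{ij}(\hatq_j-\hatp_i)=0$ and $\sum_i\omega_{ij}(\hatp_i-\hatq_j)=0$ are then equivalent to the compact pair
\begin{equation}\label{eq:br-compact}
    W\hatQ = \operatorname{diag}(c)\,\hatP
    \qquad\text{and}\qquad
    \trans{\hatP}\,W = \trans{\hatQ}\operatorname{diag}(d),
\end{equation}
and reading off the last column of the first equation (resp.\ the last row of the second) recovers $c$ and $d$ as the row and column sums, so \eqref{eq:br-compact} faithfully encodes the stress conditions.

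First I would introduce the linear map $\Phi$ sending $W\mapsto(c,-d)\in\RR^{u+v}$ and check that its image lies in the cokernel of the configuration matrix of $\cV(p,q)$. The key is that the symmetric $(d+1)\times(d+1)$ matrix $N:=\trans{\hatP}W\hatQ$ can be evaluated two ways using \eqref{eq:br-compact}: $N=\trans{\hatP}\operatorname{diag}(c)\hatP=\sum_i c_i\cV(p_i)$ and $N=\trans{\hatQ}\operatorname{diag}(d)\hatQ=\sum_j d_j\cV(q_j)$. Hence $\sum_i c_i\cV(p_i)-\sum_j d_j\cV(q_j)=0$, i.e.\ $(c,-d)$ annihilates the rows of the $\cV(p,q)$-configuration matrix. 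Thus $\Phi$ takes values in that cokernel, whose dimension is $\rank(\Gale{(\cV(p,q))})$ by definition.

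Next I would identify $\ker\Phi$ with the product stresses. If $c=d=0$, then \eqref{eq:br-compact} reduces to $W\hatQ=0$ and $\trans{\hatP}W=0$, which says every row of $W$ lies in the cokernel of $\hatQ$ and every column in the cokernel of $\hatP$; equivalently $W\in\operatorname{coker}(\hatP)\otimes\operatorname{coker}(\hatQ)$. Conversely, for $a$ with $\sum_i a_i\hatp_i=0$ and $b$ with $\sum_j b_j\hatq_j=0$, the rank-one array $W=a\trans{b}$ satisfies \eqref{eq:br-compact} with $c=d=0$, using that the last coordinates force $\sum_i a_i=\sum_j b_j=0$. Therefore $\ker\Phi=\operatorname{coker}(\hatP)\otimes\operatorname{coker}(\hatQ)$ has dimension $\rank(\Gale{p})\,\rank(\Gale{q})$.

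The main obstacle is surjectivity of $\Phi$ onto the full cokernel: given $(c,-d)$ with $\sum_i c_i\cV(p_i)=\sum_j d_j\cV(q_j)=:N$, I must produce a stress $W$ solving \eqref{eq:br-compact}. I would argue by matrix completion. Splitting $\RR^v=\operatorname{colsp}(\hatQ)\oplus\ker(\trans{\hatQ})$ and $\RR^u=\operatorname{colsp}(\hatP)\oplus\ker(\trans{\hatP})$ and writing $W$ in the corresponding $2\times 2$ block form, the first equation of \eqref{eq:br-compact} prescribes the first block column of $W$ while the second prescribes its first block row. Full column rank of $\hatP$ and $\hatQ$ makes each prescription individually solvable, and the two overlap only in the top-left block, where both demand $\trans{\hatP}W\hatQ=N$; the two computations of $N$ coincide precisely by the hypothesis on $(c,-d)$, so the prescriptions are consistent and the remaining block is free. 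This produces the desired $W$, establishing surjectivity. Rank–nullity for $\Phi$ then gives
\[
    \dim(\text{equilibrium stresses}) = \dim\ker\Phi + \dim\operatorname{im}\Phi
    = \rank(\Gale{p})\,\rank(\Gale{q}) + \rank(\Gale{(\cV(p,q))}),
\]
as claimed.
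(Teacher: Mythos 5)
Your proof is correct, but there is nothing in the paper to compare it against: the paper states this result purely as a citation to Bolker and Roth \cite{BR80} and never proves it, so you have supplied a self-contained argument where the authors rely on an external reference. Your route is, in spirit, the original Bolker--Roth decomposition, organized cleanly as a rank--nullity computation: the map $\Phi\colon W \mapsto (c,-d)$ sending a stress to its row/column sums lands in the cokernel of the $\cV(p,q)$ configuration matrix (your two evaluations of $N=\trans{\hatP}W\hatQ$ are exactly right, since $\trans{\hatP}\operatorname{diag}(c)\hatP=\sum_i c_i\cV(p_i)$ and $\trans{\hatQ}\operatorname{diag}(d)\hatQ=\sum_j d_j\cV(q_j)$); its kernel is the space of stresses with vanishing row and column sums, which is precisely $\operatorname{coker}(\hatP)\otimes\operatorname{coker}(\hatQ)$ of dimension $\rank(\Gale{p})\rank(\Gale{q})$; and surjectivity follows from your block-completion argument, where the only consistency requirement between the two prescriptions is $\trans{\hatP}W\hatQ = \sum_i c_i\cV(p_i) = \sum_j d_j\cV(q_j)$, which is exactly the hypothesis that $(c,-d)$ lies in the cokernel. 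All three steps check out, including the observation that the last coordinate (the affine $1$) forces $c$ and $d$ to be the row and column sums, so your compact matrix equations faithfully encode the equilibrium conditions. What your write-up buys is that the note would become self-contained on this point; what the citation buys the authors is brevity, plus access to the finer structure of the Bolker--Roth classification (e.g.\ the fact, used in their concluding remarks, that when $u+v<\binom{d+2}{2}+1$ generic stresses have zero diagonal), which your dimension count alone does not give. One cosmetic flaw: you use the symbol $d$ both for the ambient dimension and for the column-sum vector $\trans{W}\vecone_u$; rename the latter (say $d'$ or $e$) to avoid a genuine collision, since expressions like $\operatorname{diag}(d)$ and $u-d-1$ would otherwise refer to different objects.
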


\subsection{Trilateration and global rigidity}
A graph $H$ is a \defn{trilateration in dimension $d$} 
of a graph $G$, if $H$ is obtained from $G$ by adding a 
new vertex to $G$ and connecting it to at least $d+1$
neighbors.

Global and universal rigidity are very well-behaved with respect to trilateration: it is 
preserved no matter where the new vertex is placed, so 
long as the neighbors affinely span $\EE^d$.  
This seems to be a folklore result.
\begin{lemma}\label{lem:trilatgr}
Let $(G,p)$ be a globallly (resp universally) rigid 
framework in dimension $d$.  Let $H$ be a graph obtained 
by trilaterating $G$ and $U$ the set of at least $d+1$
neighbors.  If $p(U)$ affinely spans $\RR^d$, then 
for any placement $p(v_0)$ of the new vertex $v_0$, 
the resulting framework on $H$ is globally (resp universally) 
rigid.
\end{lemma}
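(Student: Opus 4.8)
The plan is to reduce the global (resp.\ universal) rigidity of $(H,p)$ to that of $(G,p)$ by a pinning argument, and then to resolve the position of the single new vertex $v_0$ using the elementary fact that distances to $d+1$ affinely independent points determine a point uniquely.

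First I would take an arbitrary framework $(H,q)$ equivalent to $(H,p)$: realized in $\EE^d$ in the global case, and in an arbitrary ambient space $\EE^D\supseteq \EE^d$ in the universal case. Since $G$ is a subgraph of $H$, the restriction $(G,q|_{V(G)})$ is equivalent to $(G,p|_{V(G)})$, so by the assumed global (resp.\ universal) rigidity of $(G,p)$ there is a Euclidean motion $T$ of the ambient space with $q|_{V(G)} = T\circ p|_{V(G)}$. Replacing $q$ by $T^{-1}\circ q$ --- which is congruent to $q$ and still equivalent to $(H,p)$ --- I may assume $q(w) = p(w)$ for every $w\in V(G)$. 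It then remains only to locate $q(v_0)$, and the hypothesis that $p(U)$ affinely spans $\EE^d$ will do the rest.

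For the locating step in the global case, the edge constraints give $\|q(v_0) - p(w)\| = \|p(v_0) - p(w)\|$ for all $w\in U$, with $q(v_0)\in\EE^d$. Expanding each squared distance as $\|q(v_0)\|^2 - 2\iprod{q(v_0)}{p(w)} + \|p(w)\|^2$ and subtracting the equations for pairs $w,w'\in U$ eliminates the quadratic term and yields a linear system in $q(v_0)$ whose coefficient vectors are the differences $p(w)-p(w')$. Since $p(U)$ affinely spans $\EE^d$, these differences span $\RR^d$, so the system has a unique solution; as $p(v_0)$ manifestly solves it, we get $q(v_0)=p(v_0)$, and hence $(H,q)$ is congruent to $(H,p)$.

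The one extra point, which I expect to be the only real obstacle, is the universal case, where $q(v_0)$ may lie off the affine hull $\aff(p(U)) = \EE^d$. I would split $q(v_0) = a + b$ into its orthogonal projection $a$ onto $\EE^d$ and a perpendicular part $b$, so that $\|q(v_0)-p(w)\|^2 = \|a-p(w)\|^2 + \|b\|^2$ for each $w\in U$. The same subtraction as above cancels both $\|a\|^2$ and $\|b\|^2$ and again forces $a = p(v_0)$; substituting back into any single equation then gives $\|b\|^2 = 0$, so $b=0$ and $q(v_0)=p(v_0)$. This establishes congruence in the universal case as well, completing the proof.
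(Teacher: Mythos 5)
Your proof is correct, but it is a more elementary, self-contained route than the paper's, whose proof is only a two-line sketch: the paper glues $(G,p)$ to a copy of $K_{d+2}$ on $v_0$ together with $d+1$ affinely independent neighbors, citing as black boxes that complete graphs are universally rigid and that gluing two globally (resp.\ universally) rigid frameworks along $d+1$ affinely independent shared vertices preserves global (resp.\ universal) rigidity. Your first move --- restricting an equivalent framework $(H,q)$ to $G$ and normalizing by a congruence so that $q$ agrees with $p$ on $V(G)$ --- is the common core of both arguments. Where you differ is in how $v_0$ gets pinned: instead of invoking the two folklore facts, you do the computation directly, and your orthogonal decomposition $q(v_0)=a+b$ in the universal case makes explicit the one genuinely delicate point, namely that distances to $d+1$ affinely independent points of $\EE^d$ determine $q(v_0)$ only because $p(v_0)$ itself lies in $\aff(p(U))=\EE^d$ (for a point off that affine hull, the solution set would be a positive-dimensional sphere). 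Your version also quietly sidesteps a wrinkle in the paper's sketch: the glued graph $G\cup K_{d+2}$ contains edges inside the neighbor set that $H$ need not have, so the sketch implicitly needs the extra observation that those distances are already forced once $(G,q)$ is congruent to $(G,p)$; your argument never introduces those edges at all. What the paper's approach buys is brevity and placement within standard rigidity folklore; what yours buys is a complete proof from first principles with no external dependencies.
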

\begin{proof}[Sketch]
This follows from the fact that $K_{d+2}$ is universally rigid in 
dimension $d$, and gluing two globally (or universally) rigid 
frameworks along $d+1$ affinely independent 
vertices in dimension $d$
preserves global (or universal) rigidity.
\end{proof}
We note that an immediate consequence is that 
trilaterating a generically globally 
rigid graph yields another generically globally rigid graph.

\subsection{Proof of Theorem \ref{thm: bipartite GGR}}
By Lemma \ref{lem:trilatgr}, it is sufficient to prove that $K_{m,n}$ is
generically globally rigid when $m,n\ge d+1$ and 
$m + n = \binom{d+2}{2} + 1 = D+2$.
From now on, we make this assumption.

\paragraph{The construction}
Now we describe the construction of $(p,q)$.  Realize the core of $K_{m,n}$
using Proposition \ref{prop:Kd+1d+1}.  Then, at each trilateration 
step, place the new vertex generically.

\paragraph{Infinitesimally rigid}
Using $m + n = \binom{d+2}{2} + 1$, we have, by direct computation 
\[
    mn - (m - d -1)(n - d -1) - 1 = d(m+n) - \binom{d+1}{2} 
\]
The Maxwell index theorem (Equation \eqref{eq: maxwell idx})
then tells us that if the dimension of the 
space of equilibrium stresses of $(K_{m,n},p,q)$ has dimension
\[
    (m - d -1)(n - d -1) + 1
\]
then $(K_{m,n},p,q)$ is infinitesimally rigid.

The desired equilibrium stress space dimension follows from 
Theorem \ref{thm:bolker-roth} and two observations:
\begin{itemize}
    \item Since the core is realized in general position and the rest of the points 
        are placed generically, $(p,q)$ is in general position.  Hence 
        $\rank(\Gale{p})\rank(\Gale{q}) =  (m - d -1)(n - d -1)$.
    \item The core is realized (non-generically) so that 
        $\rank(\Gale{(\cV(p(U_1),q(V_1)))}) = 1$.  This is 
        because the $2d + 2$ points of 
        $\cV(p(U_1),q(V_1))$
        have $2d$-dimensional affine span in $\AA^D$ by 
        Proposition \ref{prop:Kd+1d+1}.
        (And so have one non-trivial affine dependency in $\AA^D$.)
        
        We add $D + 2 - 2(d+1)$ additional generic points in $\EE^d$ during the 
        trilateration phase.  The images of these under $\cV$ are generic in 
        $\cV(\EE^d)$, which has $D$-dimensional affine span in $\AA^D$
        (Lemma \ref{lem: veronese nondegen}),
        so no new affine dependencies appear during the trilateration phase.
        Hence, the $D+2$ points of $\cV(p,q))$ affinely span $\AA^D$, 
        which implies that  $\rank(\Gale{(\cV(p,q))}) = 1$.
\end{itemize}

\paragraph{Globally rigid} 
Proposition~\ref{prop:Kd+1d+1}
implies that the core is super stable and thus universally rigid.
Lemma \ref{lem:trilatgr} then implies that $(G,p,q)$ is universally 
rigid and thus globally rigid.
\eop \eop

\section{Concluding remarks}
The  hypotheses of Theorem \ref{thm: bipartite GGR} are also necessary.  
We need $m, n\ge d+1$ 
to avoid contradicting Hendrickson's necessary conditions \cite{H92} for a 
graph to be GGR.
The classification of equilibrium stresses in complete bipartite 
graphs by Bolker and Roth \cite{BR80} implies that, if $m + n < \binom{d+2}{2} + 1$ 
any equilibrium stress matrix of a generic framework $(K_{m,n},p,q)$ has all zeros on its diagonal and
is thus necessarily of deficient rank.
% indefinite.  This is impossible for a GGR graph by the main result of \cite{CGT20}.

\section*{Acknowledgments}
We thank Walter Whiteley for asking if there is an explicit
construction of super stable configurations of complete 
bipartite GGR graphs and Jessica Sidman for fielding 
questions about the rational normal curve.

%\bibliographystyle{abbrvlst}
%\bibliography{bipartite}

\end{document}